\def\author#1{\gdef\autrun{\def\and{\unskip, }#1}\gdef\@author{#1}}
\newtheorem{theorem}{Theorem}[section]
\newtheorem{lemma}[theorem]{Lemma}
\newtheorem{proposition}[theorem]{Proposition}
\theoremstyle{definition}
\newtheorem{remark}[theorem]{Remark}
\numberwithin{equation}{section}
 \makeatletter\setlength{\textwidth}{15.0cm}
\definecolor{ForestGreen}{rgb}{0.15,0.416,0.18}
\definecolor{EgyptBlue}{rgb}{0.063,0.2,0.65}
\begin{document}


\title{Symmetry and classification of positive solutions of some weighted elliptic equations}

\author{Kui Li, \ \ Mengyao Liu, \  \ Jianfeng Wu\thanks{Corresponding author. 
\newline\indent This work is supported by the Natural Science Foundation of Henan Province, China (No. 242300420232). 
\newline\indent E-mail addresses: likui@zzu.edu.cn (K. Li), 2274023811@qq.com (M. Liu), wuj@zzu.edu.cn (J. Wu).
}\\
{\small School of Mathematics and Statistics, Zhengzhou University, Zhengzhou 450001, China. } }
\date{}
\maketitle
\begin{abstract}
We study the weighted elliptic equation
\begin{equation}\label{mainequ1}
-div(|x|^{-2a}\nabla u)=|x|^{-bp}|u|^{p-2}u~ ~ \mbox{in}~ \mathbb{R}^N
\end{equation}
with $N\geq 2$, which arises from the Caffarelli-Kohn-Nirenberg inequalities. Under the assumptions of finite energy and $a_1+a_2=N-2$, 
for nonnegative solutions we prove the equivalence between equation \eqref{mainequ1} with $a=a_1$ and equation \eqref{mainequ1} with $a=a_2$. Without finite energy assumptions, for $2\leq p<2^*$ we give the optimal parameter range in which nonnegative solutions of \eqref{mainequ1} in $\mathbf{L}^\infty_{Loc}(\mathbb{R}^N)$ must be radially symmetric, and give a complete classification for these solutions in this range. 
\par \textbf{Keywords}: Radial symmetry; Classification; Weighted elliptic equations
\par \textbf{AMS Subject Classification (2010)}: 35J60, 35B33, 35B06
\end{abstract}

\section{Introduction}
\quad\ \; In this paper, we study the radial symmetry and classification of solutions of the following weighted elliptic equation
\begin{equation}\label{mainequ}
-div(|x|^{-2a}\nabla u)=|x|^{-bp}|u|^{p-2}u~~~ \mbox{in}~ \mathbb{R}^N, 
\end{equation}
where $N\geq1$, $a$, $b\in \mathbb{R}$ satisfying
\begin{equation}\label{paremeter-b}
\begin{aligned}
b\in
\begin{cases}~[a, a+1], ~ &\mbox{if}~ N\geq3, \\
  (a, a+1], ~&\mbox{if}~ N=2, \\
  (a+\frac{1}{2}, a+1], ~ &\mbox{if}~ N=1
\end{cases}
\end{aligned}
\end{equation}
and
\begin{equation}\label{paremeter-p}
p=\frac{2N}{N-2+2(b-a)}. 
\end{equation}

Note that if $a=b=0$, then 
\begin{equation*}
\begin{aligned}
p=
\begin{cases}~ \frac{2N+2l}{N-2}, ~~~ &\mbox{if}~ N\geq3, \\~ \infty, ~~~ &\mbox{if}~ N=1, 2
\end{cases}
\end{aligned}
\end{equation*}
is the Sobolev exponent which is denoted by $2^*$. Hence equation \eqref{mainequ} is a natural extension of Lane-Emden equation
\begin{equation}\label{L-E}
 -\Delta u=|u|^{q-2}u, ~~~ x\in \mathbb{R}^N
\end{equation}
with $q>1$, and is also a natural extension of the weighted Lane-Emden equation (also called H\'{e}non- Lane-Emden equation)
\begin{equation}\label{weightedL-E}
 -\Delta u=|x|^l|u|^{q-2}u, ~ ~ ~ x\in \mathbb{R}^N
\end{equation}
with $q>1$ and $l\in \mathbb{R}$. For equation \eqref{weightedL-E}, we introduce the Hardy-Sobolev exponent
\begin{equation*}
\begin{aligned}
2^*(l)=
\begin{cases}~ \frac{2N+2l}{N-2}, ~~~ &\mbox{if}~N\geq3, \\
  ~ \infty, ~~~ &\mbox{if}~N=1, 2. 
\end{cases}
\end{aligned}
\end{equation*}

Let
\begin{equation}\label{paremeter-a}
a_c:=\frac{N-2}{2}. 
\end{equation}
Under the conditions $a<a_c$, \eqref{paremeter-b} and \eqref{paremeter-p}, equation \eqref{mainequ} arises from the Caffarelli-Kohn-Nirenberg inequalities (see \cite{CKN} and \cite{CW})
\begin{equation}\label{CCNinequality}
\big(\int_{\mathbb{R}^N}\frac{|u|^p}{|x|^{bp}}\mathbf{d}x\big)^{\frac{2}{p}}\leq \textbf{C}_{a, b, N}\int_{\mathbb{R}^N}\frac{|\nabla u|^2}{|x|^{2a}}\mathbf{d}x, ~~~~ \forall u\in\mathcal{D}_{a, b}, 
\end{equation}
which are the interpolations between the usual Sobolev
inequality corresponding
to $a=b=0$ and the weighted Hardy inequalities corresponding
to $b=a+1$. Optimal functions for inequality \eqref{CCNinequality} solve equation \eqref{mainequ}. The exponent $p$ given in \eqref{paremeter-p} is determined by the invariance of the inequality and the space $\mathcal{D}_{a, b}$ under the scaling $u_R(x):=R^{\frac{N-2-2a}{2}}u(Rx)$ for any $u\in \mathcal{D}_{a, b}$ and $R>0$, $\textbf{C}_{a, b, N}$ denotes the optimal constant in \eqref{CCNinequality} which depends only on the parameters $a, b$ given above and the dimension $N$, and the space $\mathcal{D}_{a, b}$ is defined by
\begin{equation}\label{D-space}
\mathcal{D}_{a, b}:=\{u\in \mathbf{L}^p(\mathbb{R}^N, |x|^{-b}\mathbf{d}x): \nabla u\in \mathbf{L}^2(\mathbb{R}^N, |x|^{-a}\mathbf{d}x)\}
\end{equation}
with norm $\|u\|^2=\big\||x|^{-a}\nabla u\big\|_{\mathbf{L}^2(\mathbb{R}^N}^2+\big\||x|^{-b} u\big\|_{\mathbf{L}^p(\mathbb{R}^N}^2$ for $u\in\mathcal{D}_{a, b}$. \par

Both symmetry and classification of the solutions of equation \eqref{mainequ}, equation \eqref{L-E} and equation \eqref{weightedL-E} are central themes in mathematics and the physical sciences. \par

A classical result for equation \eqref{L-E} is the following classification, which was established by Gidas and Spruck in their celebrated article \cite{GS}: \\
\textbf{Theorem A.} \emph{Assume that $1<q<2^*-1$. If $u$ is a nonnegative solution of equation \eqref{L-E}, then $u\equiv 0$. }\\
Later, Chen and Li \cite{CL1} gave a new proof for the above theorem by  Kelvin transform and the method of moving planes, and also studied  the critical case:  \\
\textbf{Theorem B. } \emph{Assume that $N\geq 3$, $q=2^*-1$. If $u$ is a nonnegative solutions of equation \eqref{L-E}, then $u$ must be radially symmetric, and hence either $u\equiv 0$ or $u$ takes the form
\begin{equation}\label{form-LE}
\Bigg(\frac{\sqrt{N(N-2)}\lambda}{\lambda^2+|x-x_0|^2}\Bigg)^{(N-2)/2}
\end{equation}
for some $\lambda>0$ and $x_0\in\mathbb{R}^N$. } \par

For equation \eqref{weightedL-E}, it has been proved that there is no positive solution
$u\in C^2(\mathbb{R}^N\backslash\{0\})\cap C(\mathbb{R}^N)$
for $l\leq -2$ (see \cite{DP}). If $l>-2$, Phan and Souplet \cite{PS} conjectured that\\
\textbf{Conjecture C. } \emph{Let $N\geq2$ and $l>-2$. If $0<q<2^*(l)-1$, then the nonnegative solutions of equation \eqref{weightedL-E} must be zero. }\\
For $N=3$, Phan and Souplet \cite{PS} showed that this conjecture holds true for bounded solutions, and Li and Zhang \cite{LZ1} proved the same results without the boundness assumptions. For $N\geq3$ and $l\geq 0$, Reichel and Zou \cite{RZ} gave a positive answer to this conjecture. And this conjecture was completely solved by Guo and Wan \cite{GW} (also see \cite{LZ2}). \par

Suppose that $a<a_c$, \eqref{paremeter-b}, and \eqref{paremeter-p} hold. Symmetry and classification results of the solutions of \eqref{mainequ} have also been obtained for various values of $a$ and $b$. For $N\geq2$ and $0\leq a<a_c$, it has been proved in \cite{CC}, \cite{DELT} and \cite{H} that nonnegative solutions that are bounded near the origin are radially symmetric (if $a=b=0$, then also up to a translation), and are equal to $u^*$ up to a scaling (if $a=b=0$, then also up to a translation), where
\begin{equation}\label{form-CKN}
u^*(x)=C(a, b, N)\Bigg(1+|x|^{(p-1)(a_c-a)}\Bigg)^{-2/(p-2)}
\end{equation}
and $C(a, b, N)$ is explicit which is depending only on $a$, $b$ and $N$. The case $a<0$ is by far more complicated. Under the finite energy assumption, Dolbeault, Esteban and Loss \cite{DEL1} showed that positive solutions are radial symmetric and equal to $u^*$ up to a scaling for $N\geq2$, $a<0$ and
\begin{equation}\label{b-direct}
\frac{N(N-1)+4N(a-a_c)^2}{6(N-1)+8(a-a_c)^2}+a-a_c\leq b<a+1. 
\end{equation}
For $N\geq 3$, $a<0$ and
\begin{equation}\label{b-fs}
a<b<b_{FS}(a):=\frac{N(a-a_c)}{2\sqrt{(a-a_c)^2+N-1}}+a-a_c, 
\end{equation}
Felli and Schneider \cite{FS} proved that optimal functions for inequality \eqref{CCNinequality} are non-radial, Lin and Wang \cite{LW} showed that these optimal functions have exact $O(N-1)$ symmetry, and hence equation \eqref{mainequ} has non-radial positive solutions. It was a longstanding conjecture that the Felli-Schneider curve $b_{FS}(a)$ is the threshold between the symmetry and the symmetry breaking region. Finally, Dolbeault, Esteban and Loss \cite{DEL2} gave an affirmative answer to this conjecture and showed that if $N\geq 2$, $a<b<a+1$, and either $a\in (0, a_c]$ and $b>0$ or $a<0$ and $b\geq
b_{FS}(a)$, then positive solutions of \eqref{mainequ} are radially symmetric and equal to $u^*$ up to a scaling with $u^*$ is given \eqref{form-CKN}. \par

The aim of this paper is to extend the above results. We first show that equation \eqref{mainequ} for $a\geq a_c$ is equivalent to equation \eqref{mainequ} for $a\leq a_c$. 
\begin{theorem}\label{transformtheorem}
Assume that $ N\geq 2$, $a_i$, $b_i\in\mathbb{R}$ satisfy \eqref{paremeter-b} and $p_i$ satisfy \eqref{paremeter-p} for $i=1, 2$. If $a_1+a_2=2a_c$, $b_2-a_2=b_1-a_1$, $u_1$ and $u_2$ are two nonnegative measurable functions in $\mathbb{R}^N$ satisfying $|x|^{-a_1}u_1=|x|^{-a_2}u_2$, then $u_1$ is a solution of equation \eqref{mainequ} in $\mathcal{D}_{a, b}$ with $a=a_1$ and $b=b_1$ if and only if $u_2$ is a solution of equation \eqref{mainequ} in $\mathcal{D}_{a, b}$ with $a=a_2$ and $b=b_2$. 
\end{theorem}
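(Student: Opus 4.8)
The plan is to introduce the common profile $v:=|x|^{-a_1}u_1=|x|^{-a_2}u_2$, so that $u_i=|x|^{a_i}v$ for $i=1,2$, and to show that each of the two equations is equivalent, through this substitution, to one and the same equation for $v$. The whole statement then reduces to an algebraic symmetry: under the reflection $a\mapsto 2a_c-a$ the quantity $a(2a_c-a)$ is invariant, and since $a_1+a_2=2a_c$ we have $a_1(2a_c-a_1)=a_2(2a_c-a_2)=a_1a_2$; together with the hypothesis $b_1-a_1=b_2-a_2$ (which forces $p_1=p_2=:p$ and $(a_1-b_1)p=(a_2-b_2)p$) this makes the $v$-equation independent of the index $i$.

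First I would record the conjugation identity for the weighted operator. A direct computation using $\operatorname{div}(|x|^{-2}x)=(N-2)|x|^{-2}$ gives, for smooth functions away from the origin,
$$|x|^{a_i}\operatorname{div}\!\big(|x|^{-2a_i}\nabla(|x|^{a_i}v)\big)=\Delta v+a_i(2a_c-a_i)|x|^{-2}v=\Delta v+a_1a_2|x|^{-2}v,$$
so that $|x|^{a_i}\big(-\operatorname{div}(|x|^{-2a_i}\nabla u_i)\big)=-\Delta v-a_1a_2|x|^{-2}v$ is the same for $i=1,2$. On the right-hand side the substitution is even more transparent: a pointwise weight count gives $|x|^{-b_ip}|u_i|^{p-2}u_i=|x|^{(a_i-b_i)p-a_i}|v|^{p-2}v$, and multiplying by $|x|^{a_i}$ yields $|x|^{(a_1-b_1)p}|v|^{p-2}v$, again independent of $i$. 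Hence, at least formally, $u_1$ solves \eqref{mainequ} with $(a,b)=(a_1,b_1)$ iff $v$ solves
$$-\Delta v-a_1a_2|x|^{-2}v=|x|^{(a_1-b_1)p}|v|^{p-2}v \quad\text{in }\mathbb{R}^N\setminus\{0\},$$
iff $u_2$ solves \eqref{mainequ} with $(a,b)=(a_2,b_2)$. Since $|x|^{a_2-a_1}>0$, the signs of $u_1$ and $u_2$ coincide, so nonnegativity is preserved.

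To make this rigorous at the level of weak solutions in $\mathcal{D}_{a,b}$ I would work with the bilinear forms and a common family of test functions $\varphi_i=|x|^{a_i}\psi$. Expanding $\nabla u_i\cdot\nabla\varphi_i$ and integrating the cross term by parts via $\operatorname{div}(|x|^{-2}x\,f)=|x|^{-2}\big((N-2)f+x\cdot\nabla f\big)$, one finds
$$\int_{\mathbb{R}^N}|x|^{-2a_i}\nabla u_i\cdot\nabla\varphi_i\,dx=\int_{\mathbb{R}^N}\big(\nabla v\cdot\nabla\psi-a_1a_2|x|^{-2}v\psi\big)\,dx,$$
which is manifestly independent of $i$, while the weighted $L^p$ pairings against $\varphi_i$ are equal pointwise by the count above. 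I would likewise check the space correspondence: the map $u_1\mapsto u_2=|x|^{a_2-a_1}u_1$ is an isometry $\mathcal{D}_{a_1,b_1}\to\mathcal{D}_{a_2,b_2}$, since the weighted $L^p$ norms coincide pointwise (because $b_i-a_i$ is constant) and the two Dirichlet energies differ by $(a_1-a_2)\int_{\mathbb{R}^N}\operatorname{div}(|x|^{-2}x\,v^2)\,dx$, a pure divergence. Because $\psi\mapsto|x|^{a_i}\psi$ is a bijection of the admissible test class onto itself, matching $\varphi_1\leftrightarrow\varphi_2$ then turns the weak identity for $u_1$ verbatim into the weak identity for $u_2$.

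The main obstacle is exactly the justification of the two integration-by-parts/divergence steps: one must show that the boundary contributions at the origin and at infinity vanish for functions merely of finite energy, rather than smooth and compactly supported away from $0$. I would handle this with a two-sided cutoff $\eta_\varepsilon$ supported in an annulus, estimating the resulting collar integrals by the finite $\mathcal{D}_{a_i,b_i}$-norm together with a Hardy-type control of $|x|^{-2}v^2=|x|^{-2a_i-2}u_i^2$ on thin shells, and then letting $\varepsilon\to0$. A secondary technical point is density: the cleanest reading is that a solution satisfies the weak identity for all $\varphi\in C_c^\infty(\mathbb{R}^N\setminus\{0\})$ (equivalently for all of $\mathcal{D}_{a_i,b_i}$ by density), and it is on this class that the bijection $\psi\mapsto|x|^{a_i}\psi$ operates; verifying this density for the singular weights is the remaining routine-but-delicate step. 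Once these are in place the equivalence is immediate, the algebraic heart being the single identity $a_1(2a_c-a_1)=a_2(2a_c-a_2)$.
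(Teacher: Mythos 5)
Your outline is correct, and it takes a genuinely different route from the paper. The paper verifies the transformed equation "by calculations" and obtains the weighted $L^p$ equality pointwise exactly as you do, but for the gradient term it settles for the crude bound $\int|x|^{-2a_2}|\nabla u_2|^2\le 2\int|x|^{-2a_1}|\nabla u_1|^2+2(a_1-a_2)^2\int|x|^{-2a_1-2}u_1^2$, so that the entire weight of the proof falls on the Hardy-type inequality of Lemma \ref{lem-hardy}, whose proof is solution-specific: the spherical average $\bar u$ is monotone via the ODE it satisfies (Lemma \ref{lem-monotonic}), a rescaled test-function argument gives the decay $\bar u(r)\le Cr^{-(N-2a-2)/2}$ (Lemma \ref{lem-decay}), and these control the annular error terms in a cutoff argument (spherical Poincar\'e for the oscillation part), at the price of the additive constant in \eqref{lem-hardy1}. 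You instead exploit the exact algebra of the conjugation $u_i=|x|^{a_i}v$: the pointwise identity $|x|^{-2a_2}|\nabla u_2|^2-|x|^{-2a_1}|\nabla u_1|^2=(a_2-a_1)\operatorname{div}\big(|x|^{-2}x\,v^2\big)$ (your computations here, including $a_i(2a_c-a_i)=a_1a_2$ and the common $v$-equation, are correct), which yields equality of energies rather than a one-sided bound, and conceptual clarity: both problems are conjugate to a single Hardy--Schr\"odinger equation for $v$.

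The one unsupported step is your ``Hardy-type control of $|x|^{-2a_i-2}u_i^2$ on thin shells.'' As phrased this is the dangerous point: since $a_1+a_2=2a_c$ with $a_1\neq a_2$, one index has $a_i>a_c$, and for that index no Hardy inequality controlling $\int|x|^{-2a_i-2}u_i^2$ by the gradient energy holds for general functions --- this is precisely why the paper manufactures decay from the equation. However, your sketch can be closed without the equation at all: under \eqref{paremeter-p} one checks $(2b-2a-2)\frac{p}{p-2}=-N$ exactly, so H\"older on the annulus $A_\varepsilon=\{\varepsilon<|x|<2\varepsilon\}$ gives
\begin{equation*}
\int_{A_\varepsilon}|x|^{-2a-2}u^2=\int_{A_\varepsilon}\big(|x|^{-2b}u^2\big)|x|^{2b-2a-2}\le \Big(\int_{A_\varepsilon}|x|^{-bp}u^p\Big)^{2/p}\big(\omega_N\ln 2\big)^{(p-2)/p},
\end{equation*}
a tail of the finite weighted $L^p$ norm, which tends to $0$ as $\varepsilon\to0$ (likewise on shells escaping to infinity); when $b=a+1$, $p=2$, the left side is itself that tail. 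With this, your collar terms vanish (solutions are smooth away from the origin by elliptic regularity, as the paper notes at the start of Section 2), the divergence identities and the isometry claim are justified, and you even avoid the additive constant in \eqref{lem-hardy1}. So supply that one scale-invariance computation and your proof is complete --- indeed leaner than the paper's, since it bypasses Lemmas \ref{lem-monotonic}--\ref{lem-hardy} entirely; what the paper's longer route buys in exchange is the monotonicity, decay, and Hardy estimates for solutions, which are of independent interest.
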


By Theorem \ref{transformtheorem}, we only need to consider the case of $a\leq a_c$ when studying the symmetry and classification of nonnegative solutions of equation \eqref{mainequ}. Without finite energy assumption, we have the following results. \par

\begin{theorem}\label{maintheorem}
Suppose that $ N\geq 2$, $p$ satisfies \eqref{paremeter-p}, $a=a_c$ with $b$ satisfying \eqref{paremeter-b} or $a<a_c$ with $b=a+1$ or $a\in[0, a_c)$ with $a\leq b< a+1$ or $a<0$ with $ b_{FS}(a)\leq b< a+1$. Then nonnegative solutions of equation \eqref{mainequ} in $\mathbf{L}^\infty_{Loc}(\mathbb{R}^N)$ must be radially symmetric (if $a=b=0$, then and up to a translation). \par

Moreover, let $u\in\mathbf{L}^\infty_{Loc}(\mathbb{R}^N)$ be a nonnegative solution of equation \eqref{mainequ}. If $a=a_c$ with $b$ satisfying \eqref{paremeter-b} or $a<a_c$ with $b=a+1$, then $u\equiv0$; if $a\in[0, a_c)$ with $a\leq b< a+1$ or $a<0$ with $ b_{FS(a)}\leq b< a+1$, then either $u\equiv0$ in $\mathbb{R}^N$ or $u$ is equal to $u^* $ up to a scaling  (and also up to a translation for $a=b=0$), where $u^*$ is given \eqref{form-CKN}. 
\end{theorem}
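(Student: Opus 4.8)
The plan is to pass to cylindrical (Emden--Fowler) coordinates, dispose of the two degenerate endpoints by elementary means, and reduce the substantive range $2<p\le 2^*$ to symmetry results already in the literature, the one genuine difficulty being the removal of the finite-energy hypothesis when $a<0$. Concretely, write $r=|x|$, $\omega=x/|x|\in S^{N-1}$, $s=\ln r$ and $u(x)=r^{-(a_c-a)}v(s,\omega)$. The exponent $a_c-a$ is exactly the one that cancels the first-order term, and \eqref{paremeter-p} is exactly what makes both sides of \eqref{mainequ} scale by the same power of $r$, so the equation becomes the autonomous problem
\begin{equation*}
-v_{ss}-\Delta_\omega v+(a_c-a)^2v=v^{p-1}\qquad\text{on }\mathbb{R}\times S^{N-1},\quad v\ge 0.
\end{equation*}
Radial symmetry of $u$ is equivalent to $\omega$-independence of $v$; the inversion $x\mapsto x/|x|^2$ becomes the reflection $s\mapsto-s$, under which the equation is invariant; and when $a<a_c$ the hypothesis $u\in\mathbf{L}^\infty_{Loc}(\mathbb{R}^N)$ forces $v(s,\cdot)\to0$ as $s\to-\infty$.

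The degenerate cases fall out of this picture. If $a=a_c$ the mass term disappears; setting $V(s):=\int_{S^{N-1}}v\,d\omega$ and integrating over $S^{N-1}$ gives $-V''=\int_{S^{N-1}}v^{p-1}\,d\omega\ge0$, so $V$ is concave and nonnegative on all of $\mathbb{R}$, hence constant, whence $v\equiv0$ and $u\equiv0$. If $b=a+1$ then $p=2$ and \eqref{mainequ} is linear; once radial symmetry is available it reduces to the Euler equation $r^2u''+(N-1-2a)ru'+u=0$, whose fundamental solutions $r^{-(a_c-a)\pm\sqrt{(a_c-a)^2-1}}$ either blow up at the origin or oscillate through negative values, leaving only $u\equiv0$ in $\mathbf{L}^\infty_{Loc}$.

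For the symmetry assertion in the range $2<p\le2^*$ I would split on the sign of $a$. When $0\le a<a_c$ local boundedness already suffices: \cite{CC,DELT,H} prove radial symmetry for nonnegative solutions merely bounded near the origin, which is precisely what $u\in\mathbf{L}^\infty_{Loc}(\mathbb{R}^N)$ provides. The case $a<0$ with $b_{FS}(a)\le b<a+1$ is the heart of the matter: the sharp symmetry theorem of Dolbeault--Esteban--Loss \cite{DEL2} applies, but only to finite-energy solutions $u\in\mathcal{D}_{a,b}$, whereas we assume only local boundedness. I expect the main obstacle to be exactly the passage from $\mathbf{L}^\infty_{Loc}$ to $\mathcal{D}_{a,b}$: starting from $v\to0$ as $s\to-\infty$ one must show, from the autonomous cylinder equation alone, that $v$ also decays as $s\to+\infty$ at a rate putting $u$ in $\mathcal{D}_{a,b}$ (equivalently, that the Kelvin reflection $v(-s,\omega)$ is again locally bounded). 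This is delicate because for $a<0$ the weight $|x|^{-2a}=|x|^{2|a|}$ grows at infinity, so local boundedness carries no automatic integrability and the decay must be extracted from the equation; I would attempt it by a blow-up/scaling analysis at $s=+\infty$ combined with the $s\mapsto-s$ symmetry, or, failing a clean a priori estimate, by adapting the rigidity argument of \cite{DEL2} directly to the locally bounded class. Once finite energy is secured, \cite{DEL2} yields $\omega$-independence of $v$, hence radial symmetry of $u$.

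With radial symmetry in force the classification is an ODE matter. For $2<p\le2^*$ the now $\omega$-independent profile $v=v(s)$ solves $-v''+(a_c-a)^2v=v^{p-1}$ on $\mathbb{R}$, whose only bounded nonnegative solutions are, up to translation in $s$, the homoclinic orbit to the rest state $v=0$; translating back through $u=r^{-(a_c-a)}v(\ln r)$ recovers exactly $u^*$ of \eqref{form-CKN} up to the scaling $u\mapsto u_R$ (and, in the pure Laplacian case $a=b=0$, up to a translation, as in Theorem B). The degenerate endpoints having already been shown to give $u\equiv0$, this completes the statement. The step I expect to cost the most work remains the upgrade from local boundedness to finite energy when $a<0$, since everything downstream rests on being able to invoke \cite{DEL2}.
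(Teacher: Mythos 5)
Your reduction to the autonomous cylinder problem, your treatment of $a=a_c$ by spherical averaging and concavity, and your overall strategy (endpoints by hand, main range via \cite{DEL2}) track the paper closely, and you correctly identified the crux: upgrading $u\in\mathbf{L}^\infty_{Loc}(\mathbb{R}^N)$ to $u\in\mathcal{D}_{a,b}$ so that Theorem 1.2 of \cite{DEL2} becomes applicable. But at exactly that point your text stops being a proof: ``a blow-up/scaling analysis at $s=+\infty$ combined with the $s\mapsto-s$ symmetry, or \dots adapting the rigidity argument of \cite{DEL2}'' is a plan, not an argument, and the soft version of it cannot work --- invariance of the autonomous equation under $s\mapsto -s$ says nothing about the symmetry or decay of a \emph{particular} solution. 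The paper closes this gap by running the moving planes method in the $t$-variable on the cylinder (Lemma \ref{twocases}), which yields a genuine dichotomy: either $w$ is even in $t$ about some $\Lambda$, in which case the Kelvin-type identity \eqref{decaytheorem2} gives the decay $u\le C_0|x|^{-(N-2a-2)}$ at infinity (Theorem \ref{decaytheorem}) and hence $u\in\mathcal{D}_{a,b}$ (Lemma \ref{finiteenergylemma}); or $w_t>0$ everywhere, and this monotone alternative is eliminated by an ingredient entirely absent from your proposal: $\zeta=r^{-(N-2a-2)/2}w_t>0$ solves the linearized equation \eqref{eigenequ}, so $u$ is a stable solution of \eqref{mainequ} (Lemma 3.4 in \cite{LZ2}), and the Liouville theorem for stable solutions of Du--Guo (Theorem 1.1 in \cite{DG}) then forces $u\equiv0$. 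The missing idea is precisely this stable branch of the dichotomy; without it, no amount of reflection symmetry of the equation rules out solutions that are strictly increasing in $t$ and fail to decay as $s\to+\infty$.

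Two secondary points. First, your endpoint $b=a+1$ is circular as written: you invoke the Euler ODE ``once radial symmetry is available,'' but symmetry in that case is only known a posteriori (because $u\equiv0$); since $p=2$ makes the equation linear, your indicial-root analysis should be applied to the spherical average $\bar u$, which satisfies the same ODE exactly --- this is the paper's Proposition \ref{liouville-type-eigenvalue}. Second, your ODE classification claim is overstated: bounded nonnegative solutions of $-v''+(a_c-a)^2v=v^{p-1}$ on $\mathbb{R}$ also include the constant $v\equiv(a_c-a)^{2/(p-2)}$ and a family of periodic orbits around it; only the condition $v(s)\to0$ as $s\to-\infty$ (which does follow from local boundedness at the origin) singles out $0$ and the homoclinic, and boundedness as $s\to+\infty$ is again the unproven decay. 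Your separate treatment of $a\in[0,a_c)$ via \cite{CC}, \cite{DELT}, \cite{H} is legitimate (those results assume only boundedness near the origin) and differs from the paper, which treats $[0,a_c)$ and $a<0$ uniformly through the decay estimate; but it does not rescue the case $a<0$ with $b_{FS}(a)\le b<a+1$, which is where the theorem's real content lies.
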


\begin{remark}
Assume that $b=a<0$. An unbounded sequence of non-radial positive solutions of \eqref{mainequ} in $\mathcal{D}_{a, b}$ have been built in \cite{MW} for $N\geq 5$, and the existence of at least one non-radial positive solution in $\mathcal{D}_{a, b}$ has also been obtained in \cite{BCG}, \cite{DGG} and \cite{T} for $N\geq 3$. 
\end{remark}

\begin{remark}
According to the classical work of Felli and Schneider \cite{FS}, we give the optimal parameter range in which nonnegative solutions of \eqref{mainequ1} in $\mathbf{L}^\infty_{Loc}(\mathbb{R}^N)$ must be radially symmetric. 
\end{remark}

The proof of Theorem \ref{transformtheorem} and Theorem \ref{maintheorem} can be outlined as follows. By direct calculations, we conclude that to prove Theorem \ref{transformtheorem}, we only need to show that if $u_1\in\mathcal{D}_{a_1, b_1}$, then $u_2\in\mathcal{D}_{a_2, b_2}$. To do this, we show that $\bar{u}$ is monotonic (see Lemma \ref{lem-monotonic}) and has some kind of decay estimates (see Lemma \ref{lem-decay}). And thus we prove a Hardy type inequality (see Lemma \ref{lem-hardy}). With the help of this inequality, we show $u_2\in\mathcal{D}_{a_2, b_2}$ and prove Theorem \ref{transformtheorem}. \par

The proofs of Theorem \ref{maintheorem} are divided into three cases. For the case of $a=a_c$ and the case of $a<a_c$ with $b=a+1$, we use spherical average and ODE theory to establish two Liouville-type results (see Proposition \ref{liouville-type-less} and Proposition \ref{liouville-type-eigenvalue}). In the case of $a\in[0,a_c)$ with $a\leq b< a+1$ or $a<0$ with $ b_{FS(a)}\leq b< a+1$, we make use of the moving planes methods and stability theory to show that $u$ has polynomial decay (see Theorem \ref{decaytheorem}), and prove that $u$ has finite energy (see Lemma \ref{finiteenergylemma}). With these preparations, we apply Theorem 1.2 in \cite{DEL2} to obtain Theorem \ref{maintheorem}. \par

This paper is organized as follows: in section 2, we prove Theorem \ref{transformtheorem}; in section 3, we give some Liouville-type results; in section 4, we  study decay estimates for solutions; and finally in section 4, we prove Theorem \ref{maintheorem}.

\section{Equivalence of equations}
\numberwithin{equation}{section}
 \setcounter{equation}{0}
In this section, we prove Theorem \ref{transformtheorem}. By elliptic theory, it is easy to show that the solutions of equation \eqref{mainequ} in $\mathbf{L}^\infty_{Loc}(\mathbb{R}^N)$ or in $\mathcal{D}_{a,b}$ is smooth in $\mathbb{R}^N\backslash\{0\}$. \par

We first give some notations that will be used in the rest of this paper. For $0\neq x\in \mathbb{R}^N $, let $\theta=\frac{x}{|x|} \in S^{N-1}$ and $r=|x|$, and for a continuous function $u(x)$ in $\mathbb{R}^N\backslash\{0\}$ with $N\geq 2$, we write $u=u(r,\theta)$ and define
$$\overline{u}(r)=\frac{1}{\omega_N}\int_{S^{N-1}}u(r,\theta)d\theta, $$
where $\omega_N=|S^{N-1}|$ and $d\theta$ is the surface measure on $S^{N-1}$. \par

\begin{lemma}\label{lem-monotonic}
Assume that $ N\geq 2$, $a$, $b\in\mathbb{R}$ satisfy \eqref{paremeter-b} and $p$ satisfies \eqref{paremeter-p}. If $u\in\mathcal{D}_{a, b}$ is a nonnegative solution of equation \eqref{mainequ}, then $\bar{u}$ is monotonic in $(0, \infty)$. 
\end{lemma}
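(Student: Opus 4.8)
The plan is to reduce the monotonicity of the radial average $\bar u$ to a sign condition on a single auxiliary function obtained by averaging the equation over spheres. Writing the weighted operator in polar coordinates one has
\[
\mathrm{div}(|x|^{-2a}\nabla u)=\frac{1}{r^{N-1}}\big(r^{N-1-2a}u_r\big)_r+r^{-2a-2}\Delta_{S^{N-1}}u,
\]
and since $\int_{S^{N-1}}\Delta_{S^{N-1}}u\,d\theta=0$ on the closed sphere, averaging \eqref{mainequ} over $S^{N-1}$ annihilates the angular term and yields, for $u\ge 0$,
\[
\big(r^{N-1-2a}\bar u'(r)\big)'=-\,r^{N-1-bp}\,\overline{u^{p-1}}(r)\le 0,\qquad r\in(0,\infty).
\]
(Equivalently, integrating the equation over annuli $B_r\setminus B_\varepsilon$ and using the divergence theorem produces the same identity.) Hence the function $g(r):=r^{N-1-2a}\bar u'(r)$ is non-increasing on $(0,\infty)$; this first step is essentially computational, using only the smoothness of $u$ on $\mathbb{R}^N\setminus\{0\}$ granted by elliptic regularity.

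Since $r^{N-1-2a}>0$, the sign of $\bar u'$ agrees with that of $g$, so $\bar u$ is monotone precisely when $g$ keeps a constant sign. A non-increasing function can change sign at most once, from positive to negative; thus the only obstruction to monotonicity is that $g$ be positive near $0$ and negative near $\infty$. The task therefore reduces to excluding this single sign change, and here I expect the main difficulty to lie, because it forces one to examine the asymptotic behavior of $g$ at the two endpoints, where the borderline role of $a_c=(N-2)/2$ enters through the (non)integrability of the weight $r^{-(N-1-2a)}$.

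To rule out the sign change I would argue by contradiction at whichever endpoint that weight is non-integrable, exploiting the nonnegativity $\bar u\ge 0$. If $a\le a_c$, then $N-1-2a\ge 1$ and $\int_0 r^{-(N-1-2a)}\,dr=\infty$; were $g(0^+):=\lim_{r\to0^+}g(r)>0$, then $\bar u'(r)\ge \tfrac12 g(0^+)\,r^{-(N-1-2a)}$ for small $r$, and integrating inward would force $\bar u(r)\to-\infty$ as $r\to0^+$, a contradiction. Hence $g(0^+)\le 0$, and since $g$ is non-increasing, $g\le 0$ throughout, so $\bar u$ is non-increasing. If instead $a\ge a_c$, then $N-1-2a\le 1$ and the same integral diverges at $+\infty$; the symmetric argument (assuming $g(\infty)<0$ and integrating $\bar u'\le -\tfrac12|g(\infty)|\,r^{-(N-1-2a)}$ for large $r$) again yields $\bar u\to-\infty$, so $g(\infty)\ge 0$ and $\bar u$ is non-decreasing. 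The borderline case $a=a_c$, where $N-1-2a=1$, is covered by either argument through the logarithmic divergence. In all cases $g$ has constant sign, so $\bar u$ is monotone, which is the assertion of the lemma.
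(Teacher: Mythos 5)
Your proof is correct and takes essentially the same route as the paper: both average the equation over spheres to conclude that $g(r)=r^{N-1-2a}\,\bar u'(r)$ is non-increasing, and then fix the sign of $g$ (hence of $\bar u'$) by ruling out the wrong sign at the endpoint where the weight $r^{-(N-1-2a)}$ is non-integrable ($r\to 0^+$ when $a\le a_c$, $r\to\infty$ when $a\ge a_c$), using $\bar u\ge 0$. The only difference is that you spell out the endpoint-limit argument that the paper delegates to the proof of Lemma 4.1 in \cite{LZ3}.
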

\begin{proof}
Let $N'=N-2a$ and $\tau=-bp+2a$. Then we have 
\begin{equation}\label{lem-monotonic1}
-\bar{u}''-\frac{N'-1}{r}\bar{u}'=r^\tau \overline{u^{p-1}}~\mbox{in}~(0,\infty). 
\end{equation}
Hence $r^{N'-1}\bar{u}'$ is decreasing in $(0, \infty)$. \par

If $N'\geq2$, then by the proofs of Lemma 4.1 in \cite{LZ3}, $\lim\limits_{r\rightarrow 0^+}r^{N'-1}\bar{u}'(r)\leq0$ and $\bar{u}$ is decreasing in $(0, \infty)$ and. If $N'<2$, then also by the proofs of Lemma 4.1 in \cite{LZ3}, $\lim\limits_{r\rightarrow \infty}r^{N'-1}\bar{u}'(r)\geq 0$, and $\bar{u}$ is increasing in $(0, \infty)$. In both cases, we find that $\bar{u}$ is monotonic. 
\end{proof}

Applying Lemma \ref{lem-monotonic}, we can obtain some decay estimates for $u$. 
\begin{lemma}\label{lem-decay}
If $p>2$, then there exists a positive constant $C = C(N,a,b)$ such that
\begin{equation}
\overline{u} (r) \leqslant C r^{-\frac{N-2a-2}{2}}
\end{equation}
for any $r>0$. 
\end{lemma}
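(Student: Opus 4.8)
The plan is to exploit the finite energy hypothesis $u\in\mathcal{D}_{a,b}$ together with the monotonicity of $\overline{u}$ established in Lemma \ref{lem-monotonic}. Writing $N'=N-2a$, the target rate satisfies $r^{-\frac{N-2a-2}{2}}=r^{-\frac{N'-2}{2}}$, which is precisely the exponent making the bound scale invariant. I would therefore recast the problem in the scale-invariant variable
\begin{equation*}
w(r):=r^{\frac{N'-2}{2}}\,\overline{u}(r),
\end{equation*}
so that the assertion $\overline{u}(r)\leqslant Cr^{-\frac{N'-2}{2}}$ is equivalent to the boundedness of $w$ on $(0,\infty)$.

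The first step is to control the weighted $\mathbf{L}^p$ mass of $\overline{u}$. Since $t\mapsto t^p$ is convex for $p>2$ and $u\geqslant 0$, Jensen's inequality on $S^{N-1}$ gives $\overline{u}(r)^p\leqslant\overline{u^p}(r)$, whence
\begin{equation*}
\int_0^\infty r^{N-1-bp}\,\overline{u}(r)^p\,dr\leqslant\frac{1}{\omega_N}\int_{\mathbb{R}^N}\frac{|u|^p}{|x|^{bp}}\,dx<\infty,
\end{equation*}
the finiteness being exactly the $\mathcal{D}_{a,b}$-bound from \eqref{D-space}. The exponent identity $bp+\tfrac{p}{2}(N'-2)=N$, which is equivalent to \eqref{paremeter-p} and encodes the scaling invariance of the Caffarelli--Kohn--Nirenberg functional, lets me substitute $\overline{u}=r^{-(N'-2)/2}w$ and rewrite the integral as the logarithmic, scale-invariant quantity
\begin{equation*}
E:=\int_0^\infty w(r)^p\,\frac{dr}{r}<\infty.
\end{equation*}

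The second step upgrades this integral finiteness to a pointwise bound, and this is where Lemma \ref{lem-monotonic} enters. For fixed $r>0$ choose the dyadic interval $I_r=[r/2,r]$ when $\overline{u}$ is decreasing and $I_r=[r,2r]$ when $\overline{u}$ is increasing; in both cases monotonicity yields $\overline{u}(s)\geqslant\overline{u}(r)$ for all $s\in I_r$, while $s$ is comparable to $r$ throughout $I_r$. Consequently $w(s)^p=s^{\frac{p}{2}(N'-2)}\overline{u}(s)^p\geqslant c\,w(r)^p$ on $I_r$ with $c>0$ independent of $r$, and integrating against $ds/s$ gives $c(\ln 2)\,w(r)^p\leqslant\int_{I_r}w^p\,\tfrac{ds}{s}\leqslant E$. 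Hence $w(r)\leqslant C$ for all $r>0$, which is the claim; the resulting constant $C$ depends on $N,a,b$ and on the (finite) energy of $u$.

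The only point requiring care is selecting the correct side of the dyadic interval so that monotonicity is used in the direction bounding $w(r)$ from above, and verifying that the comparability constant $c$ is uniform in $r$. Here the structure cooperates: the proof of Lemma \ref{lem-monotonic} ties the direction of monotonicity to the sign of $N'-2$, so that $\overline{u}$ decreasing pairs with the increasing weight $s^{\frac{p}{2}(N'-2)}$ (when $N'\geqslant 2$) and $\overline{u}$ increasing pairs with the decreasing weight (when $N'<2$); in either regime the weight is bounded below by a fixed power of $2$ on $I_r$, so $c$ is explicit and independent of $r$. Once the scale-invariant formulation of Step 1 is in place, the estimate is essentially automatic, and I do not anticipate a substantial obstacle beyond these bookkeeping choices.
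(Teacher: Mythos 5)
Your argument is internally sound: the exponent identity $bp+\tfrac{p}{2}(N'-2)=N$ is correct, Jensen on $S^{N-1}$ does give $\int_0^\infty w^p\,\tfrac{dr}{r}\leq \tfrac{1}{\omega_N}\||x|^{-b}u\|_{\mathbf{L}^p}^p=:E<\infty$ for $u\in\mathcal{D}_{a,b}$, and the dyadic pigeonholing via Lemma \ref{lem-monotonic} correctly converts this to $w(r)\leq C$ (in fact you do not even need the pairing of monotonicity direction with the weight's direction: on a dyadic interval the weight $s^{\frac{p}{2}(N'-2)}$ is always comparable to $r^{\frac{p}{2}(N'-2)}$ within a factor $2^{\frac{p}{2}|N'-2|}$, so only the monotonicity of $\overline{u}$ and the choice of side matter). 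However, there is a genuine gap relative to the statement being proved: the lemma asserts $C=C(N,a,b)$, a constant depending only on the equation, while your constant is $C(N,a,b,E)$ and you acknowledge as much. This uniformity is not cosmetic in the paper's architecture — Lemma \ref{lem-hardy} is stated with $C=C(N,a,b,p)$ and its constant $C_9$ in \eqref{lem-hardy5} is exactly the square of the decay constant — so your version proves a weaker lemma than the one stated. (It happens that the paper's sole downstream use, deducing $u_2\in\mathcal{D}_{a_2,b_2}$ in Theorem \ref{transformtheorem}, only needs finiteness for the fixed solution $u_1$, so your weaker version would still carry the application; but it does not establish the lemma as written.)

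The missing idea is the one the paper uses: because the bound claimed is exactly scale invariant, it suffices to prove $\overline{u}(1)\leq C(N,a,b)$ and then replace $u$ by the rescaled solution $R^{\frac{N-2a-2}{2}}u(Rx)$. The universal bound at $r=1$ is obtained without any energy input, by a nonlinear test-function (Mitidieri--Pokhozhaev type) estimate: testing the equation with a cutoff $\psi$ satisfying $|div(|x|^{-2a}\nabla\psi)|\leq C\psi^{\frac{1}{p-1}}$ gives $X:=\int|x|^{-bp}u^{p-1}\psi\leq CX^{\frac{1}{p-1}}$, hence $X\leq C(N,a,b)$ (this is where $p>2$ enters for the paper, versus entering only through Jensen's convexity for you), and then $\int_{B_2\setminus B_{1/2}}u\leq C$ plus monotonicity pins down $\overline{u}(1)$. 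Your scale-invariant reformulation is the right instinct, but anchoring the estimate to the a priori energy $E$ — a quantity that varies with $u$ even though it is scale invariant — is precisely what prevents you from reaching the stated uniform constant; note also that the paper's route needs no finite-energy hypothesis at all beyond what Lemma \ref{lem-monotonic} requires.
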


\begin{proof} We will choose a proper test-function and rescale the solutions (see e.g. \cite{LZ4} or \cite{MP}). 
Let $\psi(x)\in C_c^\infty(\mathbb{R}^N)$ be a cut-off function: 
\begin{equation}\label{lem-decay1}
\psi(x)=
\begin{cases} 1, & |x|<2, \\
  0, & |x|>3
\end{cases}
\end{equation}
with $0\leq\psi(x)\leq1$, $|div(|x|^{-2a}\nabla \psi)|\leq C_1\psi^{\frac{1}{p-1}}$ with $C=C(N, a, b)>0$. \par

By the properties of $\psi(x)$, we deduce that
\begin{equation}\label{lem-decay2}
\begin{aligned}
\int_{\mathbb{R}^N} |x|^{-bp} u^{p-1} \psi&=-\int_{\mathbb{R}^N} div(|x|^{-2a}\nabla u)\psi \\
&=-\int_{\mathbb{R}^N}div(|x|^{-2a}\nabla \psi)u\leq C_1\int_{2\leq|x|\leq3} u\psi^{\frac{1}{p-1}}. 
\end{aligned}
\end{equation}
Hence we conclude
\begin{equation}\label{lem-decay3}
\int_{\mathbb{R}^N} |x|^{-bp} u^{p-1} \psi\leq C_2(\int_{2\leq|x|\leq3} u^{p-1}\psi)^{\frac{1}{p-1}}\leq C_3(\int_{\mathbb{R}^N} |x|^{-bp} u^{p-1}\psi)^{\frac{1}{p-1}}
\end{equation}
and
\begin{equation}\label{lem-decay4}
 \int_{B_2} |x|^a u^{p-1}\leq\int_{\mathbb{R}^N} |x|^a u^{p-1} \psi\leq C_4, ~~~ \omega_N\int_{1/2}^2r^{N-1}\bar{u}\mathbf{d}r=\int_{B_{2}\backslash B_{1/2}} u\leq C_5, 
\end{equation}
where $\omega_N$ is the volume of $B_1$ and $C=C(N, a, b)>0$. By Lemma \ref{lem-monotonic}, we deduce that
\begin{equation}\label{lem-decay5}
\overline{u} (1) \leqslant C
\end{equation}
with $C=C(N, a, b)$. 

Since $R^{\frac{N-2a-2}{2}} u(Rx)$ is also a solution of equation \eqref{mainequ}, replacing $u$ by$R^{\frac{N-2a-2}{2} u(Rx)}$ in \eqref{lem-decay5} and changing variables we prove this lemma. 
\end{proof}

Using Lemma \ref{lem-decay}, we have the following Hardy type inequality. 
\begin{lemma}\label{lem-hardy}
If $p>1$, $a\neq a_c$ and $u\in\mathcal{D}_{a, b}$ is solution of equation \eqref{mainequ}, then there exists a positive constant $C = C(N, a, b, p)$ such that
\begin{equation}\label{lem-hardy1}
\int_{\mathbb{R}^N}|x|^{-2a-2}u^2\leq C\big(\int_{\mathbb{R}^N}|x|^{-2a}|\nabla u|^2+1\big). 
\end{equation}
\end{lemma}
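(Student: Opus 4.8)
The engine of the proof is the sharp weighted Hardy inequality, which I would establish by the completing-the-square method. For a parameter $\lambda\in\mathbb{R}$ to be chosen, I would expand the nonnegative quantity $\int|x|^{-2a}\big|\nabla u+\lambda|x|^{-2}xu\big|^2$. The cross term $2\lambda\int|x|^{-2a-2}u\,(x\cdot\nabla u)=\lambda\int|x|^{-2a-2}x\cdot\nabla(u^2)$ is integrated by parts; since $\mathrm{div}(|x|^{-2a-2}x)=(N-2a-2)|x|^{-2a-2}$, this produces the term $-\lambda(N-2a-2)\int|x|^{-2a-2}u^2$ up to boundary contributions. Collecting everything gives, schematically, $0\le\int|x|^{-2a}|\nabla u|^2+[\lambda^2-\lambda(N-2a-2)]\int|x|^{-2a-2}u^2+(\text{boundary})$. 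Choosing $\lambda=a_c-a=\tfrac{N-2a-2}{2}$ minimizes the bracket to $-(a_c-a)^2$, and here is precisely where the hypothesis $a\ne a_c$ enters: the coefficient $(a_c-a)^2$ of $\int|x|^{-2a-2}u^2$ is strictly positive, so it can be moved to the left-hand side.

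To make this rigorous without assuming a priori that the left-hand side is finite, I would carry out the computation on annuli $B_R\setminus B_\rho$ with $0<\rho<R<\infty$, where $u$ is smooth and all integrals converge. The integration by parts then leaves genuine boundary integrals over the spheres $|x|=R$ and $|x|=\rho$, each of the form $\tfrac12\omega_N\,r^{N-2a-2}\,\overline{u^2}(r)$ evaluated at $r=R$ and $r=\rho$. Thus the task reduces to showing that these quantities stay bounded as $R\to\infty$ and $\rho\to0$, after which one divides by $(a_c-a)^2$ and lets $\rho\to0$, $R\to\infty$, using monotone convergence on the left and $\int|x|^{-2a}|\nabla u|^2<\infty$ (from $u\in\mathcal{D}_{a,b}$) on the right.

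The control of the boundary terms is where Lemma~\ref{lem-decay} is used. That lemma gives $\overline{u}(r)\le C r^{-(N-2a-2)/2}$, i.e. $r^{N-2a-2}\big(\overline{u}(r)\big)^2\le C$ for all $r$, which is exactly the bound one wants but for $\overline{u^2}$ rather than $(\overline{u})^2$. I would bridge this gap by upgrading the decay of the spherical average to a pointwise decay estimate: the rescaling $v(y)=R^{(N-2a-2)/2}u(Ry)$ yields another nonnegative solution of \eqref{mainequ} with $\overline{v}(1)\le C$ uniformly in $R$, and interior elliptic estimates (Harnack together with the local $L^\infty$ bound on the annulus $\tfrac12\le|y|\le2$, where the weight $|y|^{-2a}$ is smooth and bounded above and below) give $\sup_{|y|=1}v\le C$. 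Translating back produces $u(x)\le C|x|^{-(N-2a-2)/2}$, hence $\overline{u^2}(r)\le C r^{-(N-2a-2)}$ and $r^{N-2a-2}\,\overline{u^2}(r)\le C$ for every $r>0$. Feeding this into the boundary integrals shows both are bounded by a single constant $C=C(N,a,b,p)$, which accounts for the additive $1$ on the right-hand side of \eqref{lem-hardy1}.

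The main obstacle is precisely this last step: Lemma~\ref{lem-decay} controls only the spherical average $\overline{u}$, whereas the boundary terms involve the average of $u^2$, and Jensen's inequality points the wrong way. Everything therefore hinges on promoting the averaged decay to a uniform pointwise decay via the scaling invariance of \eqref{mainequ} and elliptic regularity; once that is in hand, the completing-the-square identity delivers the inequality with constant $1/(a_c-a)^2$ times the energy plus the controlled boundary constant.
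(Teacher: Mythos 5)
Your completing-the-square derivation of the Hardy inequality and the reduction to controlling $r^{N-2a-2}\,\overline{u^2}(r)$ are fine, and you correctly identify the crux: Lemma~\ref{lem-decay} controls $\overline{u}$ while the boundary (or cutoff) terms involve $\overline{u^2}$, and Jensen points the wrong way. But the step you use to bridge this gap is not justified and is in fact circular. You claim that $\overline{v}(1)\le C$ for the rescaled solution $v(y)=R^{(N-2a-2)/2}u(Ry)$, together with ``Harnack and the local $L^\infty$ bound on the annulus,'' yields $\sup_{|y|=1}v\le C$ uniformly in $R$. The Harnack inequality for $-\mathrm{div}(|y|^{-2a}\nabla v)=c(y)v$ with $c(y)=|y|^{-bp}v^{p-2}$ requires a uniform bound on $c$, i.e.\ a uniform $L^\infty$ bound on $v$ on the annulus --- which is exactly the conclusion you are trying to reach; the bound $u\in\mathbf{L}^\infty_{Loc}$ does not rescale to one that is uniform in $R$. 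Nor can one pass directly from the $L^1(S^{N-1})$ average to the sup for this nonlinearity: the admissible range includes the critical case $p=2^*$ (e.g.\ $b=a$), where concentrating bubble-type behavior makes an $L^1$-to-$L^\infty$ estimate false in general (a bubble of scale $\epsilon$ has spherical average $O(\epsilon^{(N-2)/2})$ but sup $\epsilon^{-(N-2)/2}$); indeed, for $a=b<0$ non-radial solutions exist \cite{FS}, \cite{MW}, so no soft symmetry argument rescues pointwise decay either. Your route could be repaired, but only by a genuinely different ingredient: since $u\in\mathcal{D}_{a,b}$, the annular energies $\int_{R/2\le|x|\le 2R}|x|^{-2a}|\nabla u|^2$ tend to $0$, and a small-energy ($\epsilon$-regularity/Moser with smallness) argument would give the uniform sup bound; moreover your sharp boundary terms at exact radii $\rho,R$ would still require a mean-value selection of good radii, since nothing bounds $\overline{u^2}(r)$ at every single $r$.

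The paper avoids pointwise decay altogether, and this is the essential difference. It works with cutoffs $\eta_j$ (so the error is an integral over the annulus $B_{2/j}\setminus B_{1/j}$, not a boundary term at a fixed radius) and splits $u=\bar u+(u-\bar u)$, bounding the deviation by the Poincar\'e inequality on the sphere: $\overline{(u-\bar u)^2}(r)\le \frac{r^2}{N-1}\,\overline{|\nabla u|^2}(r)$. Integrated over the annulus, this term is absorbed into the global energy $\int|x|^{-2a}|\nabla u|^2$, while the mean is handled by Lemma~\ref{lem-decay} exactly as you intended, producing the bounded contribution $C\int_{1/j}^{2/j}r^{-1}\,\mathbf{d}r=C\ln 2$ that accounts for the additive $1$ in \eqref{lem-hardy1}. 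So only the decay of $\overline{u}$ --- never of $u$ itself --- is needed, and the pigeonhole issue at fixed radii never arises. As written, your proposal has a genuine gap at its pivotal step.
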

\begin{proof} In the following proof, $C_i$ are positive constants which depend only on $a$ and $N$. \par

Let $\eta \in C_c^\infty (\mathbb{R}^N)$ and $\eta_j =  \eta (jx)$ with $0 \leqslant \eta \leqslant 1$, $\eta = 0$ in $B_1$ and $\eta = 1$ in $B_2^c$.  Then $\eta_j$ is zero near the origin. \par

If $u\in C_c^\infty(\mathbb{R}^N)$, then by  Hardy inequality we have
\begin{equation}\label{lem-hardy2}
\begin{aligned}
\int_{\mathbb{R}^N}|x|^{-2a-2}u^2\eta_j^2&\leq C_6\int_{\mathbb{R}^N}|x|^{-2a}|\nabla (u\eta_j)|^2\\
&=C_6\int_{\mathbb{R}^N}|x|^{-2a}\big(|\nabla u|^2\eta^2_j+u^2|\nabla \eta_j|^2+2u\eta_j\nabla u\nabla \eta\big). 
\end{aligned}
\end{equation}
Therefore, H$\ddot{\mathrm {o}}$lder's inequality implies that
\begin{equation}\label{lem-hardy3}
\int_{\mathbb{R}^N}|x|^{-2a-2}u^2\eta_j^2\leq C_7\big(\int_{\mathbb{R}^N}|x|^{-2a}\big(|\nabla u|^2\eta^2_j+u^2|\nabla \eta|^2_j\big). 
\end{equation}
Note that
\begin{equation}\label{lem-hardy4}
\begin{aligned}
\int_{\mathbb{R}^N}|x|^{-2a}u^2|\nabla \eta_j|^2&\leq C_8 \int_{B_{\frac{2}{j}}\backslash B_{\frac{1}{j}}} |x|^{-2a-2} u^2\\
&\leq 2C_8 \int_{B_{\frac{2}{j}}\backslash B_{\frac{1}{j}}}|x|^{-2a-2} \big((u-\bar{u})^2+|x|^{-2a-2} \bar{u}^2\big)\\
&=2\omega_N C_8\int^{\frac{2}{j}}_{\frac{1}{j}} r^{N-2a-3}\big(\overline{(u(r, \theta)-\bar{u}(r))^2}+\bar{u}^2\big)\mathbf{d}r, 
\end{aligned}
\end{equation}
hence Lemma \ref{lem-decay} 
yields that
\begin{equation}\label{lem-hardy5}
\begin{aligned}
\int_{\mathbb{R}^N}|x|^{-2a}u^2|\nabla \eta_j|^2&\leq \frac{2\omega_N C_8}{N-1}\int^{\frac{2}{j}}_{\frac{1}{j}} r^{N-2a-1}\overline{|\nabla u|^2}\mathbf{d}r+C_9\int^{\frac{2}{j}}_{\frac{1}{j}} r^{-1}\mathbf{d}r\\
&=\frac{2\omega_N C_8}{N-1}\int_{B_{\frac{2}{j}}\backslash B_{\frac{1}{j}}}|x|^{-2a}|\nabla u|^2+C_9\ln 2. 
\end{aligned}
\end{equation}
By \eqref{lem-hardy3}, \eqref{lem-hardy5} and letting $j\rightarrow\infty$, we conclude that this lemma holds for $u\in C_c^\infty(\mathbb{R}^N)$ with $C=\max\{C_7, C_7C_9\ln 2\}$. \par

Now suppose that $u\not\in C_c^\infty(\mathbb{R}^N)$. We choose a test function $\varphi$ and repeat the above arguments with $\eta_j$ replacing by $\phi_j:=\varphi(\frac{x}{j})$. Then $\frac{1}{j}$ in \eqref{lem-hardy4} and \eqref{lem-hardy5} will be changed to $j$. Letting $j\rightarrow\infty$, we also obtain this lemma.
\end{proof}

Now, we turn to the proof of Theorem \ref{transformtheorem}. \par

\emph{Proof~ of~ Theorem \ref{transformtheorem}: } Without loss of generality, we assume that $a_1\neq a_2$, $u_1\in \mathcal{D}_{a_1,b_1}$ is a solution of the equation \eqref{mainequ} and $|x|^{-a_1}u_2=|x|^{a_2-a_1}u_1$. Then by calculations, we deduce that $u_2$ is a solution of equation \eqref{mainequ} with $a=a_2$ and $b=b_2$. Hence, we only need to check that $u_2\in \mathcal{D}_{a_2,b_2}$. \par

Note that
\begin{equation}\label{transformtheorem1}
\begin{aligned}
\int_{\mathbb{R}^N}|x|^{-b_2p}u_2^p&=\int_{\mathbb{R}^N}|x|^{-b_2p+(a_2-a_1)p}u_2^p\\
&=\int_{\mathbb{R}^N}|x|^{-b_2p+(b_2-b_1)p}u_2^p=\int_{\mathbb{R}^N}|x|^{-b_1p}u_1^p
\end{aligned}
\end{equation}
and
\begin{equation}\label{transformtheorem2}
\begin{aligned}
\int_{\mathbb{R}^N}|x|^{-2a_2}|\nabla u_2|^2&=\int_{\mathbb{R}^N}|x|^{-2a_2}|\nabla (|x|^{a_2-a_1}u_1)|^2\\
&\leq 2\int_{\mathbb{R}^N}|x|^{-2a_1}|\nabla u_1|^2+2(a_1-a_2)^2\int_{\mathbb{R}^N}|x|^{-2a_1-2}u_1^2, 
\end{aligned}
\end{equation}
hence Lemma \ref{lem-hardy} and the fact that  $u_1\in \mathcal{D}_{a_1, b_1}$ imply that  $u_2\in \mathcal{D}_{a_2, b_2}$.

\section{Liouville-type results}
\numberwithin{equation}{section}
 \setcounter{equation}{0}
\quad\ \; In this section, we give some Liouville-type results for equation \eqref{mainequ}. Our first Liouville-type result is

\begin{proposition}\label{liouville-type-less}
Assume that $ N\geq 2$, $a=a_c$ and  $b$ satisfies \eqref{paremeter-b}. If $u\in\mathbf{L}^\infty_{Loc}(\mathbb{R}^N)$ is a nonnegative solution of equation \eqref{mainequ}, then $u\equiv0$ in $\mathbb{R}^N$. 
\end{proposition}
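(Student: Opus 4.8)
The plan is to reduce everything to the ODE satisfied by the spherical average $\overline{u}$ and to exploit the fact that the choice $a=a_c$ makes the problem critical in a very rigid way. First I would record that, since every solution in $\mathbf{L}^\infty_{Loc}(\mathbb{R}^N)$ is smooth in $\mathbb{R}^N\backslash\{0\}$, the averaged identity \eqref{lem-monotonic1} holds pointwise on $(0,\infty)$ for such $u$ as well; its derivation only uses the classical equation away from the origin and not the finite-energy hypothesis of Lemma \ref{lem-monotonic}, which therefore cannot be quoted verbatim here. With $a=a_c$ one computes directly from \eqref{paremeter-a}, \eqref{paremeter-b} and \eqref{paremeter-p} that $N'=N-2a=2$ and, because $bp=N$, that $\tau=-bp+2a=-2$, independently of the admissible value of $b$. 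Hence \eqref{lem-monotonic1} reads $-\overline{u}''-\frac{1}{r}\overline{u}'=r^{-2}\,\overline{u^{p-1}}$, i.e. in divergence form $-(r\overline{u}')'=r^{-1}\,\overline{u^{p-1}}\geq0$ on $(0,\infty)$.

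Next I would set $g(r):=r\overline{u}'(r)$, so that $g$ is non-increasing on $(0,\infty)$. I claim $g\geq0$ everywhere: if $g(r_0)<0$ at some $r_0>0$, then $g(r)\leq g(r_0)<0$ for all $r\geq r_0$, whence $\overline{u}'(r)\leq g(r_0)/r$, and integrating gives $\overline{u}(r)\to-\infty$, contradicting $\overline{u}\geq0$. Thus $\overline{u}'\geq0$, so $\overline{u}$ is non-negative and non-decreasing.

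To finish I would argue by contradiction: suppose $u\not\equiv0$. By smoothness away from the origin there is a radius $r_*>0$ with $\overline{u}(r_*)=:m>0$, and monotonicity gives $\overline{u}(r)\geq m$ for all $r\geq r_*$. Since the range \eqref{paremeter-b} forces $p\geq2$ (indeed $p=N/b\in[2,2^*]$ for $N\geq3$ and $p=2/b\geq2$ for $N=2$), Jensen's inequality yields $\overline{u^{p-1}}\geq\overline{u}^{\,p-1}\geq m^{p-1}$ on $[r_*,\infty)$. Integrating the divergence-form equation on $[r_*,r_2]$ then gives $g(r_*)-g(r_2)=\int_{r_*}^{r_2}s^{-1}\overline{u^{p-1}}\,\mathbf{d}s\geq m^{p-1}\ln(r_2/r_*)\to\infty$ as $r_2\to\infty$; but $0\leq g(r_2)\leq g(r_*)$ forces the left-hand side to remain bounded by $g(r_*)$, a contradiction. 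Hence $u\equiv0$.

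The crux is the structural observation that $a=a_c$ renders the equation effectively two-dimensional ($N'=2$) carrying the critical inverse-square weight $r^{-2}$, so that the driving term in the averaged equation has the scale-invariant density $r^{-1}$; this is exactly what produces the divergent logarithm. The only points requiring care are the justification that \eqref{lem-monotonic1} stays valid for merely locally bounded solutions (so that the finite-energy Lemma \ref{lem-monotonic} is not directly applicable and monotonicity must be re-established as above) and the verification that $p\geq2$ throughout \eqref{paremeter-b}, which is what legitimizes the use of Jensen's inequality.
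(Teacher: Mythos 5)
Your proof is correct and takes essentially the same route as the paper: the paper also reduces to the spherical average, uses that $a=a_c$ gives $N'=2$ and $\tau=-2$, applies Jensen's inequality, and then analyzes the resulting ODE, where your monotone quantity $g(r)=r\overline{u}'(r)$ and the divergent logarithm are exactly the paper's argument for $y(t)=\overline{u}(e^t)$ (which satisfies $y_{tt}+y^{p-1}\leq 0$, so $y$ is concave and nonnegative on $\mathbb{R}$, hence zero) rewritten via $r=e^t$, since $g(r)=y_t(t)$. As a minor bonus, you explicitly verify $p\geq 2$ (which legitimizes Jensen) and re-derive monotonicity without invoking the finite-energy hypothesis of Lemma \ref{lem-monotonic}, two points the paper passes over silently.
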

\begin{proof}
By Jessen's inequality we have
\begin{equation}\label{liouville-type-less1}
-\bar{u}''-\frac{1}{r}\bar{u}'\geq \frac{\bar{u}^{p-1}}{r^2}~ \mbox{in}~ (0, \infty). 
\end{equation}
For $t\in \mathbb{R}$, we define $r=e^t$ and $y(t)=\bar{u}(r)$. Then direct calculations imply that
\begin{equation}\label{liouville-type-less2}
y_{tt}+y^{p-1}\leq0~ \mbox{in}~ \mathbb{R}. 
\end{equation}
Hence $y$ is convex and nonnegative in $\mathbb{R}$, which implies that $y\equiv 0$ in $\mathbb{R}$ and $u\equiv0$ in $\mathbb{R}^N$. 
\end{proof}

For $a<a_c$, we have
\begin{proposition}\label{liouville-type-eigenvalue}
Suppose that $ N\geq 2$, $a< a_c$ and $b=a+1$. If $u\in\mathbf{L}^\infty_{Loc}(\mathbb{R}^N)$ is a nonnegative solution of equation \eqref{mainequ}, then $u\equiv0$ in $\mathbb{R}^N$. 
\end{proposition}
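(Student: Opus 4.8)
The plan is to exploit the fact that the choice $b=a+1$ forces, through \eqref{paremeter-p}, the exponent $p=\frac{2N}{N-2+2}=2$, so that the right-hand side $|x|^{-bp}|u|^{p-2}u$ collapses to the \emph{linear} term $|x|^{-2a-2}u$ and equation \eqref{mainequ} reduces to the linear eigenvalue-type equation $-div(|x|^{-2a}\nabla u)=|x|^{-2a-2}u$. Because the nonlinearity has disappeared, spherical averaging is now exact and no convexity inequality is lost (in contrast to Proposition \ref{liouville-type-less}, where Jensen's inequality was needed): averaging the equation over $S^{N-1}$, as in the derivation of \eqref{lem-monotonic1}, which uses only that $u$ is a classical solution away from the origin and hence applies to any $u\in\mathbf{L}^\infty_{Loc}$ by elliptic regularity, yields, with $\tau=-bp+2a=-2$ and $N'=N-2a$,
\begin{equation*}
-\bar{u}''-\frac{N'-1}{r}\bar{u}'=\frac{\bar{u}}{r^2}\quad\text{in }(0,\infty).
\end{equation*}

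Next I would pass to the logarithmic variable $r=e^t$, $y(t)=\bar{u}(e^t)$, exactly as in the proof of Proposition \ref{liouville-type-less}. A direct computation turns the Euler-type equation above into the constant-coefficient linear ODE
\begin{equation*}
y_{tt}+2(a_c-a)\,y_t+y=0\quad\text{in }\mathbb{R},
\end{equation*}
whose characteristic roots are $\lambda_\pm=-(a_c-a)\pm\sqrt{(a_c-a)^2-1}$. The decisive structural fact is that $a<a_c$ makes the damping coefficient $2(a_c-a)$ strictly positive, while the product of the roots equals $1$; hence both roots have strictly negative real part (two negative reals when $(a_c-a)\geq1$, a complex conjugate pair with real part $-(a_c-a)$ when $(a_c-a)<1$).

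From this I would argue that \emph{every} nontrivial solution $y$ is unbounded as $t\to-\infty$, i.e.\ that $\bar{u}$ blows up as $r\to0^+$. In the real case the dominant exponential $e^{\lambda_- t}$ (and, in the degenerate case $(a_c-a)=1$, the factor $t\,e^{-t}$) tends to $\pm\infty$ as $t\to-\infty$; in the complex case the envelope $e^{-(a_c-a)t}$ grows while the oscillatory factor returns to $\pm1$ along a sequence $t_k\to-\infty$, so $|y(t_k)|\to\infty$. Since $u\in\mathbf{L}^\infty_{Loc}(\mathbb{R}^N)$ gives $|\bar{u}(r)|\leq\sup_{|x|\leq r}|u|<\infty$ for $r$ small, $\bar{u}$ is bounded near the origin, which is compatible only with $y\equiv0$, that is $\bar{u}\equiv0$. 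Finally, nonnegativity upgrades this to the full conclusion: $\int_{S^{N-1}}u(r,\theta)\,d\theta=\omega_N\bar{u}(r)=0$ together with $u\geq0$ and the smoothness of $u$ in $\mathbb{R}^N\backslash\{0\}$ forces $u\equiv0$.

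The main obstacle is bookkeeping rather than a deep difficulty: one must make the blow-up-at-the-origin argument genuinely uniform across the sign of the discriminant, since for $(a_c-a)<1$ the solutions oscillate and a naive pointwise bound does not immediately exhibit unboundedness. The cleanest remedies are the envelope/subsequence observation above, or, alternatively, to split the analysis and note that in the oscillatory case a nontrivial $y$ changes sign, which already contradicts $y=\bar{u}\geq0$ before boundedness is invoked at all; in the real case the $\mathbf{L}^\infty_{Loc}$ hypothesis is then genuinely needed, as $\bar{u}=r^{\lambda_-}$ shows that nonnegativity alone does not exclude a singular solution at the origin.
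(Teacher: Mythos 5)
Your proposal is correct and follows essentially the same route as the paper: with $b=a+1$ the exponent in \eqref{paremeter-p} gives $p=2$, spherical averaging yields \eqref{liouville-type1}, the logarithmic substitution produces the damped linear ODE \eqref{liouville-type2} (your damping coefficient $2(a_c-a)$ is exactly the paper's $N'-2$), and the conclusion follows because no nontrivial nonnegative solution of that ODE is bounded near $-\infty$. Your case analysis of the characteristic roots (two negative real roots, double root, or a complex pair with negative real part, where sign-changing already contradicts $\bar{u}\geq 0$) simply makes explicit the fact the paper dismisses as ``well known'', and it quietly corrects the paper's typo ``$a\in(a_c,N/2)$'', which should read $a<a_c$ so that $N'-2=2(a_c-a)>0$.
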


The proof of Proposition \ref{liouville-type-eigenvalue} can be obtained by combination of Lemma 2.2 and Lemma 2.5 in \cite{CLZ}. Here we give another simple proof. \par

\emph{Proof~ of~ Proposition~ \ref{liouville-type-eigenvalue}: }
By direct calculations, we have
\begin{equation}\label{liouville-type1}
-\bar{u}''-\frac{N'-1}{r}\bar{u}'=\frac{\bar{u}}{r^2}~ \mbox{in}~ (0, \infty)
\end{equation}
with $N'=N-2a$ and $\tau=-2$. Let $z(t)=\overline{u}(e^t)$ for $t\in \mathbb{R}$. Then $y$ is bounded near $-\infty$ and satisfies
\begin{equation}\label{liouville-type2}
z_{tt}+(N'-2)z_t+z=0~\mbox{in}~ \mathbb{R}. 
\end{equation}
When $a\in (a_c, N/2)$, it is well known that the linear equation \ref{liouville-type2} does not have positive solutions that are bounded near $-\infty$. Thus, by the maximum principle, we obtain this proposition.

\section{Decay estimates for positive solutions}
\numberwithin{equation}{section}
 \setcounter{equation}{0}
\quad\ \; We will use the moving planes methods to study the decay estimates of locally bounded and nonnegative solution of equation \eqref{mainequ}. 
\begin{theorem}\label{decaytheorem}
Suppose that $N\geq 2$, $a, b\in \mathbb{R}$ with $a<a_c$  and \eqref{paremeter-b} hold. If $u\in\mathbf{L}^\infty_{Loc}(\mathbb{R}^N)$ is a nonnegative solution of equation \eqref{mainequ}, then 
\begin{equation}\label{decaytheorem1}
|u|\leq C_0|x|^{-(N-2a-2)}~ \mbox{for}~ |x|\gg1, 
\end{equation}
where $C_0$ is a nonnegative constant which may depend on $u$. 
\end{theorem}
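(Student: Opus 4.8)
The plan is to use the method of moving planes in the Kelvin-transformed picture to obtain the decay estimate $|u|\leq C_0|x|^{-(N-2a-2)}$ for large $|x|$. Since the equation $-\mathrm{div}(|x|^{-2a}\nabla u)=|x|^{-bp}|u|^{p-2}u$ has the critical homogeneity built in through the scaling $u_R(x)=R^{(N-2-2a)/2}u(Rx)$, the natural first step is to introduce a Kelvin-type transform adapted to the weight. Set $N'=N-2a$ as in the earlier lemmas, and define $v(x)=|x|^{-(N'-2)}u(x/|x|^2)$ (or the appropriately weighted analogue). A direct computation should show that $v$ solves an equation of the same form on $\mathbb{R}^N\setminus\{0\}$, with the roles of the origin and infinity exchanged. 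Here the identity $b\in[a,a+1]$ together with $p=2N/(N-2+2(b-a))$ is exactly what makes the nonlinear term transform covariantly, so the critical exponent plays its usual decisive role.

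First I would establish the correct behaviour of $v$ near the origin: since $u$ is assumed only to be in $\mathbf{L}^\infty_{Loc}(\mathbb{R}^N)$, I need to rule out a strong singularity of $v$ at $0$, which corresponds to controlling the growth of $u$ at infinity. The monotonicity of $\bar u$ from Lemma \ref{lem-monotonic} and the decay bound $\bar u(r)\leq Cr^{-(N-2a-2)/2}$ from Lemma \ref{lem-decay} give me a first handle on the spherical average, but the pointwise estimate requires more. The core of the argument is then to run the moving-plane (or moving-sphere) procedure on $v$: I would slide hyperplanes from a far direction, use the maximum principle for the weighted operator $\mathrm{div}(|x|^{-2a}\nabla\cdot)$ on the reflected domains, and conclude that $v$ is bounded near the origin. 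Translating back through the Kelvin transform converts boundedness of $v$ at $0$ into precisely the decay rate $|u(x)|\leq C_0|x|^{-(N'-2)}=C_0|x|^{-(N-2a-2)}$ for $|x|\gg1$.

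The main obstacle I expect is handling the weight $|x|^{-2a}$ inside the moving-plane machinery: the operator is not the plain Laplacian, the weight is singular at the origin, and the reflected weight does not coincide with the original one, so the usual comparison between $u$ and its reflection breaks the clean structure needed for the maximum principle. The standard fix is to work with the function $w=|x|^{-a}u$ (or the substitution $N'=N-2a$ that effectively replaces the dimension), which recasts the weighted operator into an unweighted Laplacian in a fractional dimension and restores the comparison principle; the price is that one must control the extra zeroth-order and first-order terms produced by this conjugation, and verify that the sign conditions needed to start and to propagate the moving planes are preserved across the singular point $x=0$. A secondary technical point is the low-dimensional regime $N'<2$, where the decay exponent $N-2a-2$ becomes negative and the transform degenerates; there I would instead read the estimate directly from the increasing monotonicity of $\bar u$ established in Lemma \ref{lem-monotonic}, so that the statement remains vacuously or trivially true. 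With the comparison principle in hand, the remaining steps—choosing the test functions, verifying decay of $v$ at infinity to start the planes, and the contradiction argument that forces the planes all the way to the origin—are routine adaptations of the Chen–Li and Caffarelli–Gidas–Spruck scheme.
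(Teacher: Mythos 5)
Your overall instinct---exploit the Kelvin/dilation covariance of the equation and run a moving-plane argument---shares its skeleton with the paper's proof, but your proposal has a genuine gap at its central step: you assert that the moving-plane procedure will ``conclude that $v$ is bounded near the origin,'' and this is precisely what cannot be concluded in general. The paper passes to cylindrical variables $w(t,\theta)=r^{(N-2a-2)/2}u(r,\theta)$, $t=\ln r$ (equation \eqref{transforequation}), and moves planes in the $t$-direction only, which in $\mathbb{R}^N$ amounts to moving spheres centered at the origin; the outcome (Lemma \ref{twocases}) is a dichotomy: either the planes stop at some finite $\Lambda$, in which case $w$ is symmetric about $t=\Lambda$, $u$ coincides with its Kelvin transform up to scaling (identity \eqref{decaytheorem2}), and the decay \eqref{decaytheorem1} follows from the local boundedness of $u$; or the planes never stop and $w_t>0$ on all of $\mathbb{R}\times S^{N-1}$. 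In this second, strictly monotone case there is no symmetry plane and no boundedness of the Kelvin transform at the origin to be extracted from the maximum principle---the decay must come from elsewhere. The paper's substitute is a different mechanism entirely: $w_t>0$ means $\zeta=\frac{N-2a-2}{2}u+\nabla u\cdot x$ is a positive solution of the linearized equation \eqref{eigenequ}, hence $u$ is a stable solution (Lemma 3.4 in \cite{LZ2}), and the Liouville theorem for stable solutions of Du--Guo (Theorem 1.1 in \cite{DG}) forces $u\equiv 0$, making \eqref{decaytheorem1} trivial. Your proposal contains no counterpart to this stability/Liouville step, and without it the argument does not close.

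Two further points. First, your plan to ``slide hyperplanes from a far direction'' in the Euclidean picture cannot work as stated: the weight $|x|^{-2a}$ destroys translation invariance (except when $a=b=0$), so reflection across an off-center hyperplane does not preserve the equation; the only usable symmetry is the dilation group, which is exactly why the paper reflects in the variable $t=\ln|x|$. The conjugation $w=|x|^{-a}u$ you suggest changes the operator but does not restore translation invariance, so it does not repair this. Second, your worry about the regime $N'=N-2a<2$ is vacuous here: the hypothesis $a<a_c=\frac{N-2}{2}$ is equivalent to $N-2a-2>0$, i.e.\ $N'>2$, so the decay exponent in \eqref{decaytheorem1} is always positive under the theorem's assumptions.
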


\begin{remark}\label{decayremark}
By Theorem \ref{decaytheorem}, all radial and non-radial nonnegative solutions of equation \eqref{mainequ} in $\mathbf{L}^\infty_{Loc}(\mathbb{R}^N)$ have decay estimates. 
\end{remark}

For $x\in \mathbb{R}^N\backslash\{0\}$, let $r=|x|$, $t=\ln r$ and $w(t, \theta)=r^{\frac{N-2a-2}{2}} u(r, \theta)$. Then $w$ solves the equation
\begin{equation}\label{transforequation}
\left\{ 
\begin{aligned}
w_{tt}&+\Delta_{S^{N-1}}w-\frac{(N-2a-2)^2}{4}w+w^{p-1}=0~~ \mbox{in}~ \mathbb{R}\times S^{N-1}, \\
&\lim\limits_{t\rightarrow-\infty}w(t, \theta)=0~ \mbox{uniformly~ for~ } \theta\in S^{N-1}. 
\end{aligned}\right. 
\end{equation}  

For any $\lambda \in \mathbb{R}$, we define
\begin{equation*}
\begin{aligned}
&\Sigma_\lambda=\{(t, \theta): t<\lambda, ~ \theta\in S^{N-1}\}, \\
&w^\lambda=w(2\lambda-t, \theta)-w(t, \theta), \\
&T_\lambda=\partial\Sigma_\lambda=\{(t, \theta): t=\lambda, ~ \theta\in S^{N-1}\}. \\
\end{aligned}
\end{equation*}
Then for any $\lambda \in \mathbb{R}$, 
\begin{equation}\label{transforequation1}
w^\lambda=0, ~~ \forall (t, \theta)\in T_\lambda. 
\end{equation}

By the limit in \eqref{transforequation}, we have
\begin{equation}\label{transforequation2}
\lim \limits_{t\rightarrow -\infty} \inf \limits_{\omega\in S^{N-1}} w^\lambda(t,\omega)\geq 0, ~~~ \forall~ \lambda \in \mathbb{R}. 
\end{equation}
Besides, direct calculations imply that $w^\lambda$ solves the equation
\begin{equation}\label{transforequation3}
\begin{aligned}
w^\lambda_{tt}+\Delta_{S^{N-1}} w^\lambda-\frac{(N-2a-2)^2}{4}w^\lambda+ c^\lambda w^\lambda=0, ~~ (t, \theta)\in\Sigma_\lambda, 
\end{aligned}
\end{equation}
where $c^\lambda(t, \theta)=(p-1)\int_0^1\big(sw(2\lambda-t, \theta)+(1-s)w(t, \theta)\big)^{p-2}\mathbf{d}s>0$. \par

If $w^\lambda<0$, then $c^\lambda(t, \theta)<(p-1)w^{p-2}(t, \theta)$. Hence there exists $\lambda_0\in\mathbb{R}$ such that for any $\lambda\in\mathbb{R}$, $\lambda_1\leq \lambda_0$, we have
\begin{equation}\label{transforequation4}
c^\lambda(t, \theta)-\frac{(N-2a-2)^2}{8}<0, ~~ \forall(t, \theta)\in \Sigma_{\lambda_1}\cap\Omega_{w^\lambda}^-
\end{equation}
with $\Omega_{w^\lambda}^-=\{(t, \theta): w^\lambda(t, \theta)<0\}$, and $w^\lambda$ solves the equation
\begin{equation}\label{transforequation5}
v_{tt}+\Delta_{S^{N-1}} v-\frac{(N-2a-2)^2}{8} v\leq0~\mbox{in}~\Sigma_{\lambda_2}\cap\Omega_v^-, 
\end{equation}
where $\lambda_2=\min\{\lambda, \lambda_1\}$. \par

By equation \eqref{transforequation}, \eqref{transforequation3} and  \eqref{transforequation5}, we can use the moving planes methods prove

\begin{lemma}\label{twocases}
Either $w_t(t, \theta)>0$ for all $(t, \theta)\in \mathbb{R}\times S^{N-1}$ or there exists $\Lambda \in \mathbb{R}$ such that 
$w(2\Lambda-t, \omega)=w(t, \omega)$ for all $(t, \theta)\in \mathbb{R}\times S^{N-1}$. 
\end{lemma}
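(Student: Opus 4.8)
The plan is to apply the method of moving planes in the $t$-direction to the transformed function $w(t,\theta)$ solving \eqref{transforequation}, exploiting the one-sided structure already set up in \eqref{transforequation1}--\eqref{transforequation5}. For $\lambda\in\mathbb{R}$ I consider the reflected difference $w^\lambda=w(2\lambda-t,\theta)-w(t,\theta)$, which vanishes on $T_\lambda$ by \eqref{transforequation1} and is asymptotically nonnegative as $t\to-\infty$ by \eqref{transforequation2}. The goal is to show that $w^\lambda\geq 0$ throughout $\Sigma_\lambda$ for all $\lambda$ up to some critical value, and then to analyze what happens at the supremum of admissible $\lambda$.

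First I would establish the \emph{starting step}: for $\lambda$ sufficiently negative, $w^\lambda\geq 0$ in $\Sigma_\lambda$. On the negativity set $\Omega_{w^\lambda}^-$, the linearized coefficient satisfies the smallness estimate \eqref{transforequation4} (valid because $w\to 0$ as $t\to-\infty$ makes $c^\lambda$ small there), so $w^\lambda$ obeys the differential inequality \eqref{transforequation5} with a strictly negative zeroth-order coefficient $-\tfrac{(N-2a-2)^2}{8}$. Combined with the boundary condition on $T_\lambda$ and the asymptotic nonnegativity \eqref{transforequation2}, a maximum-principle argument on the (possibly unbounded) domain $\Sigma_{\lambda_1}\cap\Omega_{w^\lambda}^-$ forces $\Omega_{w^\lambda}^-=\emptyset$. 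Here I would use that the operator $\partial_{tt}+\Delta_{S^{N-1}}-\tfrac{(N-2a-2)^2}{8}$ with a negative potential admits a maximum principle on unbounded cylinders, so that a negative interior infimum cannot be attained and cannot escape to the boundary $t\to-\infty$.

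Next I define $\Lambda=\sup\{\lambda:\ w^\mu\geq 0 \text{ in }\Sigma_\mu\ \text{for all }\mu\leq\lambda\}$. If $\Lambda=+\infty$, then $w^\lambda\geq 0$ for every $\lambda$, which means $w(2\lambda-t,\theta)\geq w(t,\theta)$ for all $t<\lambda$ and all $\lambda$; letting $\lambda$ vary this yields monotonicity $w_t(t,\theta)>0$ everywhere, the first alternative (strictness coming from the strong maximum principle applied to equation \eqref{transforequation3}, since $w^\lambda\geq 0$ and $w^\lambda\not\equiv 0$). If $\Lambda<+\infty$, then by continuity $w^\Lambda\geq 0$ in $\Sigma_\Lambda$, and I claim $w^\Lambda\equiv 0$, i.e. $w(2\Lambda-t,\theta)=w(t,\theta)$, giving the second alternative. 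To prove this I argue by contradiction: if $w^\Lambda\not\equiv 0$, the strong maximum principle applied to \eqref{transforequation3} gives $w^\Lambda>0$ strictly in the interior of $\Sigma_\Lambda$, and then a standard continuation argument shows $w^{\Lambda+\varepsilon}\geq 0$ for small $\varepsilon>0$, contradicting the maximality of $\Lambda$.

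The main obstacle will be the \emph{continuation step} at $\lambda=\Lambda$, since the domain $\Sigma_\lambda$ is an unbounded cylinder and the classical narrow-domain / compactness arguments must be adapted accordingly. Concretely, to push $\lambda$ slightly past $\Lambda$ I need to control the negativity set of $w^{\Lambda+\varepsilon}$ uniformly near $t\to-\infty$ and in the newly added slab near $t=\Lambda$. Near $-\infty$ the smallness of $c^\lambda$ again gives \eqref{transforequation4} and hence the maximum principle via \eqref{transforequation5}, uniformly in $\varepsilon$; in the thin region $\Lambda<t<\Lambda+\varepsilon$ I would combine the strict positivity of $w^\Lambda$ on compact subsets with a narrow-domain estimate (the measure of the slab being small forces the first eigenvalue of the linearized operator to be positive there). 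Balancing these two regimes so that $\Omega_{w^{\Lambda+\varepsilon}}^-$ is empty is the delicate point, and it is exactly where the decay hypothesis feeding \eqref{transforequation2} and the sign of the potential $-\tfrac{(N-2a-2)^2}{8}$ (which requires $a<a_c$, so $N-2a-2>0$) are essential.
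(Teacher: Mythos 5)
Your proposal is correct and follows essentially the same route the paper intends: the paper gives no detailed argument for Lemma \ref{twocases}, merely asserting that the moving plane method applies on the cylinder via the setup \eqref{transforequation1}--\eqref{transforequation5}, and your proof (starting step from the smallness of $c^\lambda$ near $t=-\infty$ encoded in \eqref{transforequation4}--\eqref{transforequation5}, definition of the critical value $\Lambda$, and the dichotomy via the strong maximum principle and a continuation/Hopf argument at $\Lambda$) is precisely the standard implementation of that scheme, with the unbounded direction $t\to-\infty$ correctly controlled by \eqref{transforequation2} and the sign $N-2a-2>0$ used where it is needed.
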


Now, we prove Theorem \ref{decaytheorem}. \par
\emph{Proof~ of~ Theorem \ref{decaytheorem}: } We first consider the first case obtained in Lemma \ref{twocases}. If $w_t(t, \theta)>0$ for all $(t, \theta)\in \mathbb{R}\times S^{N-1}$, then $\zeta(x):=\frac{N-2a-2}{2}u+\nabla u\cdot x=r^{-\frac{N-2a-2}{2}}w_t>0$ for any $x\in\mathbb{R}^N\backslash\{0\}$. Hence $\zeta(x)$ is a positive solution of
\begin{equation}\label{eigenequ}
-div(|x|^{-2a}\nabla \zeta)=(p-1)|x|^{-bp}u^{p-2}\zeta~~~ \mbox{in}~ \mathbb{R}^N. 
\end{equation}
By Lemma 3.4 in \cite{LZ2}, we conclude that $u$ is stable solution of equation \eqref{mainequ}. Then by Theorem 1.1 in \cite{DG}, we deduce that $u=0$ in $\mathbb{R}^N$ and Theorem \ref{decaytheorem} holds. \par

If $w(2\Lambda-t, \theta)=w(t, \theta)$ for all $(t, \theta)\in \mathbb{R}\times S^{N-1}$, then we have
\begin{equation}\label{decaytheorem2}
|x|^{\frac{N-2a-2}{2}} u(x)=e^{(N-2a-2)\Lambda}|x|^{-\frac{N-2a-2}{2}}u(\frac{e^{2\Lambda}x}{|x|^2}), ~~ \forall x\in \mathbb{R}^N\backslash\{0\}. 
\end{equation}
Hence
\begin{equation}\label{decaytheorem3}
|u|\leq C_0|x|^{-(N-2a-2)}~ \mbox{for}~ |x|\gg1, 
\end{equation}
where $C_0=e^{(N-2a-2)\Lambda}\sup\limits_{|x|\leq e^{2\Lambda}}u(x)\in [0,\infty)$.

\section{Classification of solutions}
\numberwithin{equation}{section}
 \setcounter{equation}{0}
\quad\ \; In this section, we give the proof of Theorem \ref{maintheorem}. If $a<a_c$ and \eqref{paremeter-b} holds, then $N-2a>0$, $N-2a-2-2(N-2a-2)<0$ and $N-bp-p(N-2a-2)<0$. Hence by Theorem \ref{decaytheorem}, we have
\begin{lemma}\label{finiteenergylemma}
Assume that $N\geq 2$, $a, b\in \mathbb{R}$ with $a<a_c$  and \eqref{paremeter-b} holding. If $u\in\mathbf{L}^\infty_{Loc}(\mathbb{R}^N)$ is a nonnegative solution of equation \eqref{mainequ}, then $u\in\mathcal{D}_{a, b}$. 
\end{lemma}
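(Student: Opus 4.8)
The goal is to verify that a nonnegative $u\in\mathbf{L}^\infty_{Loc}(\mathbb{R}^N)$ solving \eqref{mainequ} lies in $\mathcal{D}_{a,b}$, i.e. that both the potential term $\int_{\mathbb{R}^N}|x|^{-bp}u^p$ and the Dirichlet term $\int_{\mathbb{R}^N}|x|^{-2a}|\nabla u|^2$ are finite. The only input beyond the interior smoothness of $u$ on $\mathbb{R}^N\setminus\{0\}$ is the decay estimate $u\leq C_0|x|^{-(N-2a-2)}$ for $|x|\gg1$ supplied by Theorem \ref{decaytheorem}, together with the three sign conditions on the exponents recorded just before the statement.

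First I would dispose of the potential term by splitting at $|x|=1$. Near the origin $u$ is bounded since $u\in\mathbf{L}^\infty_{Loc}$, so $\int_{B_1}|x|^{-bp}u^p\leq C\int_{B_1}|x|^{-bp}$, which converges precisely when $bp<N$; a short computation from \eqref{paremeter-p} shows $bp<N$ is equivalent to $a<a_c$, the standing hypothesis (and consistent with $N-2a>0$). Near infinity I would insert the decay bound to get $\int_{B_1^c}|x|^{-bp}u^p\leq C\int_{B_1^c}|x|^{-bp-p(N-2a-2)}$, whose convergence is exactly the condition $N-bp-p(N-2a-2)<0$. Hence $\int_{\mathbb{R}^N}|x|^{-bp}u^p<\infty$.

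The gradient integral is the harder part, since Theorem \ref{decaytheorem} controls $u$ but says nothing about $\nabla u$. My plan is a Caccioppoli-type argument that trades $\int|x|^{-2a}|\nabla u|^2$ for the already-finite potential integral. Fix a cutoff $\phi_R$ equal to $1$ on $B_R\setminus B_{1/R}$ and vanishing outside $B_{2R}\setminus B_{1/(2R)}$, so that $\phi_R$ is supported away from the origin (where the weight is singular and the equation degenerate) and $u$ is smooth on its support. Multiplying \eqref{mainequ} by $u\phi_R^2$, integrating by parts (no boundary terms arise, since $u\phi_R^2$ is compactly supported in $\mathbb{R}^N\setminus\{0\}$), and absorbing the cross term via Young's inequality yields
\begin{equation*}
\tfrac{1}{2}\int_{\mathbb{R}^N}|x|^{-2a}|\nabla u|^2\phi_R^2\leq\int_{\mathbb{R}^N}|x|^{-bp}u^p\phi_R^2+2\int_{\mathbb{R}^N}|x|^{-2a}u^2|\nabla\phi_R|^2.
\end{equation*}
It then remains to show that the last term vanishes as $R\to\infty$. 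Its integrand is supported on two annuli. On the outer annulus $R<|x|<2R$ I would use the decay bound and $|\nabla\phi_R|^2\leq CR^{-2}$ to estimate it by $CR^{\,N-2a-2-2(N-2a-2)}=CR^{-(N-2a-2)}$; on the inner annulus $1/(2R)<|x|<1/R$ I would use the local boundedness of $u$ and $|\nabla\phi_R|^2\leq CR^{2}$ to obtain a bound of the same order $R^{-(N-2a-2)}$. Both tend to $0$ precisely because $a<a_c$, i.e. the sign condition $N-2a-2-2(N-2a-2)<0$. Letting $R\to\infty$ and using monotone convergence on the left gives $\int_{\mathbb{R}^N}|x|^{-2a}|\nabla u|^2\leq 2\int_{\mathbb{R}^N}|x|^{-bp}u^p<\infty$, which completes the proof.

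The main obstacle, and the only genuinely nontrivial step, is this gradient bound: one cannot integrate a gradient decay estimate directly, so the whole argument hinges on the Caccioppoli identity and on both cutoff error terms vanishing. The delicate point is the behavior at the origin, where the degeneracy of the equation forces the cutoff to stay away from $0$; the vanishing of the inner cutoff term then relies on combining the local $\mathbf{L}^\infty$ bound with the exponent condition $N-2a-2>0$, the very same condition that controls the outer annulus.
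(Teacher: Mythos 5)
Your proposal is correct and follows essentially the route the paper intends: the paper's own proof is just the observation that $N-2a>0$, $N-2a-2-2(N-2a-2)<0$ and $N-bp-p(N-2a-2)<0$ combined with Theorem \ref{decaytheorem}, and your splitting at $|x|=1$, the equivalence $bp<N\iff a<a_c$, and the Caccioppoli estimate with a cutoff supported away from the origin are exactly the computations those three sign conditions are designed for. In fact you supply the details (notably the integration by parts justifying the gradient bound and the vanishing of both annular error terms at rate $R^{-(N-2a-2)}$) that the paper leaves implicit.
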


With the help of Lemma \ref{finiteenergylemma}, we now can prove Theorem \ref{maintheorem}. \par

\emph{Proof~ of~ Theorem \ref{maintheorem}: } If $a=a_c$ with $b$ satisfying \eqref{paremeter-b}, then by Proposition \ref{liouville-type-less}, we obtain this theorem. If $a<a_c$ with $b=a+1$, then by Proposition \ref{liouville-type-eigenvalue}, we also get this theorem. If $a\in[0, a_c)$ with $a\leq b< a+1$ or $a<0$ with $ b_{FS}(a)\leq b< a+1$, then by Lemme \ref{finiteenergylemma}, we know that $u\in\mathcal{D}_{a, b}$. Hence by Theorem 1.2 in \cite{DEL2}, Theorem \ref{maintheorem} also holds

\section*{Acknowledgements}
\noindent

The authors thank the referees for their valuable suggestions.

\end{document}